\numberwithin{equation}{section}
\newtheorem{Thm}[equation]{Theorem}
\newtheorem*{Thm*}{Theorem}
\newtheorem{Cor}[equation]{Corollary}
\theoremstyle{remark}
\newtheorem{Ter}[equation]{Terminology}
\newtheorem{Exa}[equation]{Example}
\newtheorem{Rem}[equation]{Remark}
\newcommand{\nc}{\newcommand}
\nc{\dmo}{\DeclareMathOperator}
\dmo{\EM}{\textit{H}}  
\dmo{\Ab}{Ab}
\dmo{\Abelem}{Abelem}
\dmo{\Add}{Add}
\dmo{\Aut}{Aut}
\dmo{\Bi}{bi}
\dmo{\Bisets}{Bisets}
\dmo{\CAT}{CAT}
\dmo{\coev}{coev}
\dmo{\Coloc}{Coloc}
\dmo{\ev}{ev}
\dmo{\Fib}{Fib}
\dmo{\Free}{Free}
\dmo{\Id}{Id}
\dmo{\Loc}{Loc}
\dmo{\rmI}{I}
\dmo{\rmL}{L}
\dmo{\rmR}{R}
\dmo{\Spc}{Spc}
\dmo{\Thick}{Thick}
\dmo{\chara}{char}%
\dmo{\coh}{coh} 
\dmo{\Coind}{CoInd}
\dmo{\coker}{coker}
\dmo{\cone}{cone}
\dmo{\Der}{D}
\nc{\Rder}{\mathrm{R}} 
\nc{\Lder}{\mathrm{L}} 
\dmo{\Khocat}{K}
\dmo{\End}{End}
\dmo{\Ext}{Ext}
\dmo{\rmH}{H}
\dmo{\Ho}{Ho}
\dmo{\Hom}{Hom}
\dmo{\id}{id}
\dmo{\Img}{Im}
\dmo{\incl}{incl}
\dmo{\Ind}{Ind}
\dmo{\CoInd}{CoInd}
\dmo{\Ker}{Ker}
\dmo{\Les}{Les}
\dmo{\Map}{Map}%
\dmo{\Mod}{Mod}
\dmo{\GrMod}{GrMod}
\dmo{\lax}{lax}
\dmo{\modname}{mod}%
\dmo{\grmod}{grmod}
\dmo{\Mor}{Mor}%
\dmo{\Obj}{Obj}
\dmo{\opname}{op}
\dmo{\Or}{Or}
\dmo{\pr}{pr}
\dmo{\canin}{in} 
\dmo{\Proj}{Proj} 
\dmo{\proj}{proj}
\dmo{\Qcoh}{Qcoh}
\dmo{\rank}{rank}
\dmo{\Res}{Res}
\dmo{\Rname}{R}
\dmo{\SH}{SH}
\nc{\SHp}{\SH_{(p)}}
\dmo{\smallb}{b}
\dmo{\smallperf}{perf}
\dmo{\Span}{Span}
\dmo{\Spec}{Spec}
\dmo{\Stab}{Stab}
\dmo{\stab}{stab}
\dmo{\supp}{supp}
\dmo{\switch}{switch}
\dmo{\TTR}{TTR}
\nc{\Beren}[1]{{\color{MidnightBlue}#1}}
\nc{\Ivo}[1]{{\color{OliveGreen}#1}}
\nc{\Paul}[1]{{\color{Violet}#1}}
\nc{\Pout}[1]{\Paul{\sout{#1}}}
\nc{\Bout}[1]{\Beren{\sout{#1}}}
\nc{\Iout}[1]{\Ivo{\sout{#1}}}
\nc{\IFF}{$\Leftrightarrow$}
\nc{\DO}{\omega_f}
\nc{\Crelcpt}{\cat{C}^{c/f}}
\nc{\Crich}{\underline{\cat{C}}}
\nc{\DbG}{\Db(\kk G\mmod)}
\nc{\uA}{\underline{A}}
\nc{\doublequot}[3]{#1\backslash #2/#3}
\nc{\HGK}{\doublequot HGK}
\nc{\quadtext}[1]{\quad\textrm{#1}\quad}
\nc{\qquadtext}[1]{\qquad\textrm{#1}\qquad}
\nc{\PZG}{\cat C_{\bbZ}(\bbZ G)}
\nc{\TTRK}{\TTR(\cat K)}
\nc{\psets}{\mathsf{-sets}_\sbull}
\nc{\Gsets}{G\mathsf{-sets}}
\nc{\Hsets}{H\mathsf{-sets}}
\nc{\AddK}{\Add^{\Sigma}(\cat K)}
\nc{\adj}{\dashv}
\nc{\adjto}{\rightleftarrows}
\nc{\AK}{A\MModcat{K}}
\nc{\BK}{B\MModcat{K}}
\nc{\bbA}{\mathbb{A}}
\nc{\bbB}{\mathbb{B}}
\nc{\bbC}{\mathbb{C}}
\nc{\bbI}{\mathbb{I}}
\nc{\bbN}{\mathbb{N}}
\nc{\bbP}{\mathbb{P}}
\nc{\bbQ}{\mathbb{Q}}
\nc{\bbR}{\mathbb{R}}
\nc{\bbZ}{\mathbb{Z}}
\nc{\bbZp}{\mathbb{Z}_{(p)}}
\nc{\Sphere}{\mathbb{S}} 
\nc{\cat}[1]{\mathscr{#1}}
\nc{\Displ}{\displaystyle}
\nc{\ie}{{\sl i.e.}\ }
\nc{\eg}{{\sl e.g.}\ }
\nc{\into}{\mathop{\rightarrowtail}}
\nc{\inv}{^{-1}}
\nc{\isoto}{\buildrel \sim\over\to}
\nc{\isotoo}{\mathop{\buildrel \sim\over\too}}
\nc{\kk}{\Bbbk}
\nc{\onto}{\mathop{\twoheadrightarrow}}
\nc{\too}{\mathop{\longrightarrow}\limits}
\nc{\xytriangle}[7]{\xymatrix@C=#7em{#1\ar[r]^-{\Displ #4} & #2 \ar[r]^-{\Displ #5}&#3\ar[r]^-{\Displ #6}&T #1}}
\nc{\ababs}{{\sl ab absurdo}}
\nc{\adh}[1]{\overline{#1}}
\nc{\adhpt}[1]{\adh{\{#1\}}}
\nc{\aka}{{a.\,k.\,a.}\ }
\nc{\ala}{{\sl \`a la}\ }
\nc{\Autcat}[1]{\Aut_{\cat #1}}
\nc{\cO}{\mathcal{O}}
\nc{\calO}{\mathcal{O}}
\nc{\cV}{\mathcal{V}}
\nc{\Db}{\Der^{\smallb}}
\nc{\Dqc}{\Der_{\Qcoh}}
\nc{\Dperf}{\Der^{\smallperf}}
\nc{\eps}{\epsilon}
\nc{\FFree}{\,\text{-}\Free}%
\nc{\FFreecat}[1]{\FFree_{\cat #1}}
\nc{\FK}{\mathcal{F}(\cat K)}
\nc{\gm}{\mathfrak{m}}
\nc{\Homcat}[1]{\Hom_{\cat #1}}
\nc{\Morcat}[1]{\Mor_{\cat #1}}
\nc{\hook}{\hookrightarrow}
\nc{\Idcat}[1]{\Id_{\cat{#1}}}
\nc{\ideal}[1]{\langle #1\rangle}
\nc{\ihom}{{\mathsf{hom}}} 
\nc{\ihomcat}[1]{\ihom_{\cat #1}}
\nc{\Kcat}[1]{#1\MModcat{K}}
\nc{\KP}{\cat{K}_{\cat P}}
\nc{\loccit}{{\sl loc.\ cit.}}
\nc{\lind}{\rmL\!}
\nc{\RR}{\rmR\!}
\nc{\Lotimes}{\otimes^{\rmL}}
\nc{\Mid}{\,\big|\,}
\nc{\MMod}{\,\text{-}\Mod}%
\nc{\MModcat}[1]{\MMod_{\cat #1}}%
\nc{\mmod}{\,\text{--}\modname}%
\nc{\mmodb}{\mmod^\sbull}%
\nc{\op}{^{\opname}}
\nc{\oto}[1]{\overset{#1}\to}
\nc{\otoo}[1]{\overset{#1}{\,\too\,}}
\nc{\ourfrac}[2]{\genfrac{}{}{0pt}{}{\Displ #1}{\scriptstyle #2}}
\nc{\ouriff}{\Leftrightarrow}
\nc{\oursetminus}{\!\smallsetminus\!}
\nc{\potimes}[1]{^{\otimes #1}}
\nc{\pproj}{\,\text{-}\proj}
\nc{\ptimes}[1]{^{\times #1}}
\nc{\dd}[1]{_{{\scriptscriptstyle(#1)}}}
\nc{\uu}[1]{^{{\scriptscriptstyle(#1)}}}
\nc{\pushout}{\textrm{\rm p.o.}}
\nc{\qp}{q_{_{\scriptstyle \cat P}}\!}%
\nc{\Rcat}[1]{\Rname_{\cat #1}^\sbull}
\nc{\rdto}{}
\nc{\restr}[1]{_{|_{\scriptstyle #1}}}
\nc{\RK}{\Rcat{K}}
\nc{\sbull}{{\scriptscriptstyle\bullet}}
\nc{\SET}[2]{\big\{\,#1\Mid#2\,\big\}}
\nc{\SHA}{\SH{}^{\bbA^{1}}}
\nc{\SHfin}{\SH^{\text{\rm fin}}}
\nc{\smallmatrice}[1]{\left(\begin{smallmatrix} #1 \end{smallmatrix}\right)}
\nc{\SpcAK}{\Spc(A\MModcat{K})}
\nc{\SpcK}{\Spc(\cat K)}
\nc{\suppcat}[1]{\supp(\cat #1)}
\nc{\then}{\Rightarrow}
\nc{\tideal}[1]{\ideal{#1}}
\nc{\unit}{\mathbb{1}}
\nc{\unitcat}[1]{\unit_{\cat #1}}
\nc{\onept}{\mathrm{B}} 
\nc{\HG}{\!{}^{^H}\overline{G}}
\nc{\uY}{\widetilde{Y}}
\nc{\Dk}{\dual_{\kappa}}
\nc{\Dkk}{\dual_{\kappa'}}
\nc{\bs}{\backslash}
\nc{\biCpt}{\mathrm{biCpt}}
\nc{\biLCpt}{\mathrm{biLCpt}} 
\nc{\Grps}{\mathsf{Grps}}
\nc{\Sets}{\mathsf{Sets}}
\nc{\Top}{\mathsf{Top}}
\nc{\Comp}{\mathsf{Top}^{\mathsf{comp}}}
\nc{\lG}{{}_{{\color{Gray}\scriptscriptstyle G}}}
\nc{\lH}{{}_{{\color{Gray}\scriptscriptstyle H}}}
\nc{\rG}{_{{\color{Gray}\!\scriptscriptstyle G}}}
\nc{\rH}{_{{\color{Gray}\!\scriptscriptstyle H}}}
\nc{\rK}{_{{\color{Gray}\!\scriptscriptstyle K}}}
\nc{\dual}{\Delta}
\nc{\ra}{\rightarrow}
\nc{\xra}{\xrightarrow}
\nc{\C}{\mathbb{C}} 
\nc{\Cont}{\mathrm{C}} 
\nc{\KK}{\mathsf{KK}}
\nc{\Modules}{\mathsf{Mod}}
\nc{\Alg}{\mathsf{Alg}}
\nc{\Sep}{\mathsf{Sep}}
\begin{document}
\def\Rdef{1.7em}


\title{A note on triangulated monads and categories of module spectra}
\author{Ivo Dell'Ambrogio}
\author{Beren Sanders}
\date{\today}

\address{Ivo Dell'Ambrogio, Laboratoire de Math\'ematiques Paul Painlev\'e, Universit\'e de \break Lille~1, Cit\'e Scientifique -- B\^at.~M2, 59665 Villeneuve-d'Ascq Cedex, France}
\email{ivo.dellambrogio@math.univ-lille1.fr}
\urladdr{http://math.univ-lille1.fr/$\sim$dellambr}

\address{Beren Sanders, Laboratory for Topology and Neuroscience, \'{E}cole Polytechnique F\'{e}d\'{e}rale de Lausanne, B\^at.~MA, Station 8, 1015 Lausanne, Switzerland}
\email{beren.sanders@epfl.ch}

\begin{abstract}
Consider a monad on an idempotent complete triangulated category with the property that its Eilenberg--Moore category of modules inherits a triangulation. We show that any other triangulated adjunction realizing  this monad is `essentially monadic', i.e.\ becomes monadic after performing the two evident necessary operations of  taking the Verdier quotient by the kernel of the right adjoint and idempotent completion. 
In this sense, the monad itself is `intrinsically monadic'.
It follows that for any highly structured ring spectrum, its category of homotopy (a.k.a.\ na\"ive) modules is triangulated if and only if it is equivalent to its category of highly structured (a.k.a.\ strict) modules. 
 \end{abstract}

\keywords{Triangulated category, monadicity, ring spectrum, Eilenberg--MacLane spectrum}

\thanks{First-named author partially supported by the Labex CEMPI (ANR-11-LABX-0007-01)}
\thanks{Second-named author partially supported by the Danish National Research Foundation through the Centre for Symmetry and Deformation (DNRF92)}

\maketitle

\vskip-\baselineskip\vskip-\baselineskip

\section{Triangulated monads and their realizations}

Let $\cat C$ be an idempotent complete triangulated category.  Given a monad $\bbA$ on~$\cat C$, we can ask whether the Eilenberg--Moore category $\bbA\MMod_\cat C$ of $\bbA$-modules in $\cat C$ (a.k.a.\ $\bbA$-algebras) 
inherits the structure of a triangulated category.
Although this seems to rarely occur in Nature, it does happen in some notable situations,
for example if $\bbA$ is an idempotent monad (\ie a Bousfield localization).
More generally, Balmer \cite{Balmer11} proved that this is the case when the monad $\bbA$ is separable
(provided that $\cat C$ is endowed with an $\infty$-triangulation, which is always the case when it admits an underlying model or derivator).
Some non-separable examples are also known (see \cite{Gutierrez05} and Example~\ref{exa:EML} below).

 In this 
 note, we consider the \emph{consequences} of 
$\bbA\MMod_\cat C$
 being triangulated.
We prove 
that if this is the case then
 \emph{any} triangulated realization of~$\bbA$ is essentially monadic, \ie monadic after applying two necessary operations: a Verdier quotient and an idempotent completion.
In a slogan:

\begin{quotation}
	Triangulated monads which have triangulated Eilenberg--Moore adjunctions are intrinsically monadic.
\end{quotation}
The proof of this amusing fact will be given in Theorem~\ref{thm:essential_monadicity} 
below 
(see also Corollary~\ref{cor:main}).
An application to categories of module spectra will be discussed at the end (see Corollary~\ref{cor:equiv_ring}).

\begin{Ter} \label{ter:monads}
We recall some basic facts about monads from \cite[Chap.~VI]{MacLane98}, mostly to fix 
notation.
Every adjunction $F\colon \cat C\rightleftarrows \cat D:\!G$ with unit $\eta\colon \id_\cat C\to GF $ and counit $\varepsilon\colon FG\to \id_\cat D$ defines a monad $\bbA$ on $\cat C$ consisting of the endofunctor $\bbA:=GF\colon \cat C\to \cat C$ equipped with the multiplication map $G\varepsilon F\colon \bbA^2\to \bbA$ and unit map $\eta\colon \id_\cat C\to \bbA$. We say the adjunction $F\dashv G$ \emph{realizes} the monad~$\bbA$.
Given any monad $\bbA$ on~$\cat C$, there always exist an initial and a final adjunction realizing~$\bbA$:
\begin{equation} \label{eq:big_diag}
	\begin{gathered}
\xymatrix{
&& \cat C 
 \ar@<-2pt>[d]_-F 
 \ar@<-2pt>[drr]_-{F_\bbA} 
 \ar@<-2pt>[dll]_-{F_\bbA} && \\
\bbA \FFree_{\cat C} 
 \ar@<-2pt>[urr]_-{U_\bbA} 
 \ar[rr]_{\exists !\, K} 
 &&
 \cat D
  \ar@<-2pt>[u]_G 
  \ar[rr]_-{\exists!\, E}   &&
 \bbA\MMod_{\cat C}. 
 \ar@<-2pt>[ull]_-{U_\bbA}
}
	\end{gathered}
\end{equation}
The final one is provided by the \emph{Eilenberg--Moore} category $\bbA\MMod_\cat C$, whose objects are $\bbA$-modules $(x, \rho\colon \bbA x\to x)$ in $\cat C$, together with the forgetful functor $U_\bbA\colon {(x,\rho)\mapsto x}$ and its left adjoint \emph{free-module} functor~$F_\bbA$. The full image of $F_\bbA$, together with the restricted adjunction, provides the initial realization~$\bbA\FFree_\cat C$ (often called the \emph{Kleisli category}). For any 
adjunction
$F\dashv G$
realizing the monad~$\bbA$, the fully faithful inclusion $\bbA \FFree_\cat C\to \bbA \MMod_\cat C$ uniquely factors as a composite $E\circ K$ of two comparison functors 
satisfying
$KF_\bbA= F, GK=U_\bbA$ and $EF = F_\bbA,U_\bbA E=G$. The functor $K$ is always automatically fully faithful.
Finally, an adjunction $F\dashv G$ is \emph{monadic} if the associated Eilenberg--Moore comparison $E$ is an equivalence.
\end{Ter}

\begin{Rem}
	If $\cat C$ is a triangulated category then we can also consider \emph{triangulated realizations} of $\bbA$,
	\ie realizations by an
	adjunction $F\colon \cat C \adjto \cat D:\!G$ of exact functors between triangulated categories.
	(Of course, $\bbA\colon \cat C \to \cat C$ must be exact for 
	a triangulated realization to exist.)
Note that if $\bbA\MMod_\cat C$ is triangulated such that $U_\bbA$ is exact, then
the free module functor $F_\bbA\colon \cat C\to \bbA\MMod_\cat C$ is also automatically exact, hence the adjunction $F_\bbA\dashv U_\bbA$ is a triangulated realization of~$\bbA$. 
However, $\bbA\MMod_\cat C$ need not admit such a triangulation in general (see \eg \cite[Ex.\,5.5]{Lagkas-Nikolos18}).
\end{Rem}

\begin{Rem} \label{rem:necessary}
As $\cat C$ is assumed to be idempotent complete, one easily checks that the Eilenberg--Moore category $\bbA\MMod_\cat C$ is also idempotent complete. Moreover $U_\bbA$ is faithful, so in particular it detects the vanishing of objects; since $U_\bbA$ is an exact functor, the latter is equivalent to being \emph{conservative}, \ie reflecting isomorphisms.
\end{Rem}

\begin{Rem} \label{rem:adjustable}
If
$F:\cat C\rightleftarrows \cat D: G$ is any triangulated realization of~$\bbA$, then we can always canonically modify it to a triangulated adjunction $\widetilde F: \cat C\rightleftarrows \widetilde{\cat D} : \widetilde G$ where, as with the Eilenberg--Moore adjunction, the target category $\widetilde{\cat D}$ is idempotent complete and the right adjoint $\widetilde G$ is conservative.
Indeed, construct the Verdier quotient $\cat D/ \Ker G$,
embed it into its idempotent completion $(\cat D/\Ker G)^\natural$ (see~\cite{BalmerSchlichting01}), and let $\widetilde F$ be the composite $\cat D \to \cat D/\Ker G \to (\cat D/\Ker G)^\natural =: \widetilde{\cat D}$.
Since $\cat C$ is idempotent complete, $G$ extends to a functor $\widetilde G\colon \widetilde{\cat D}\to \cat C$ right adjoint to $\widetilde F$ and one easily verifies that the adjunctions $\widetilde F\dashv \widetilde G $ and $ F\dashv G$ realize the same monad.
\end{Rem}

	Our aim is to study all possible triangulated realizations of $\bbA$ under the hypothesis that the Eilenberg--Moore adjunction is triangulated.
By Remarks \ref{rem:necessary} and \ref{rem:adjustable}, this problem reduces to the case where the target category is idempotent complete and the right adjoint is conservative.

Surprisingly, such an adjunction is necessarily monadic:

\begin{Thm} \label{thm:essential_monadicity}
Let $\cat C$ be an idempotent complete triangulated category equipped with a monad $\bbA$ such that 
$\bbA \MMod_\cat C$ is compatibly triangulated, \ie such that the free-forgetful adjunction $F_\bbA \dashv U_\bbA$ is a triangulated realization of~$\bbA=U_\bbA F_\bbA$.  Let $F: \cat C \rightleftarrows \cat D: G$ be any triangulated realization of the same monad $\bbA = GF$. If $\cat D$ is idempotent complete and $G$ is conservative, then $F\dashv G$ is monadic, \ie the comparison functor $\cat D \stackrel{\sim}{\to} \bbA \MMod_\cat C$ is an equivalence.
\end{Thm}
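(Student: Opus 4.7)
I would establish that the canonical comparison functor $E\colon \cat D\to \bbA\MMod_\cat C$ is an equivalence by successively checking exactness, conservativity, full faithfulness, and essential surjectivity. Exactness and conservativity are relatively direct: since $U_\bbA\circ E=G$ and $U_\bbA$ is a faithful exact functor, one verifies that $E$ commutes with suspensions and sends distinguished triangles to distinguished triangles (using that a triangle in $\bbA\MMod_\cat C$ whose image under $U_\bbA$ is distinguished is itself distinguished, via a standard morphism-of-triangles argument). Conservativity of $E$ follows because $Ed\cong 0$ implies $Gd=U_\bbA Ed\cong 0$, which forces $d\cong 0$ by conservativity of $G$.

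The central observation for full faithfulness is that when the source object is free, the adjunction isomorphisms give a natural identification
$$\Hom_\cat D(Fc,d) \cong \Hom_\cat C(c,Gd) = \Hom_\cat C(c,U_\bbA Ed) \cong \Hom_{\bbA\MMod_\cat C}(EFc,Ed),$$
so $E$ is automatically fully faithful on the subcategory $F(\cat C)\subseteq \cat D$. To propagate to arbitrary $d\in \cat D$, the idea is to apply $\Hom_\cat D(-,d')$ and $\Hom_{\bbA\MMod_\cat C}(-,Ed')$ to the counit triangle $M\to FGd\to d\to M[1]$ in $\cat D$, which $E$ sends (by exactness together with $EF=F_\bbA$, $U_\bbA E=G$) to the corresponding counit triangle of $\bbA\MMod_\cat C$ at $Ed$; a five-lemma reduces the statement at $d$ to the statement at $M$.

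For essential surjectivity, given $(x,\rho)\in \bbA\MMod_\cat C$, the counit triangle $N\to F_\bbA x\xrightarrow{\rho}(x,\rho)\to N[1]$ in $\bbA\MMod_\cat C$ is \emph{split} after applying $U_\bbA$ (via the unit $\eta_x$), so $U_\bbA N$ is a direct summand of $\bbA x=GFx$. The goal is to lift this triangle to a triangle $n\to Fx\to d$ in $\cat D$ with $En\cong N$, which by exactness of $E$ would force $Ed\cong (x,\rho)$. Note that the naive candidate $d:=\cofib(F\rho-\epsilon_{Fx})$ does not work, as already seen for $\bbA=\id$ where it gives $Fx\oplus Fx[1]$ instead of $Fx$.

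The main obstacle is that full faithfulness and essential surjectivity are \emph{coupled}: propagating full faithfulness past $d$ via the five-lemma requires it at $M$, and constructing $d$ realizing $(x,\rho)$ requires first constructing an $n\in \cat D$ realizing $N$. Both $M$ and $N$ are ``simpler'' only weakly, in that their underlying objects in $\cat C$ are summands of $\bbA$ applied to smaller objects. One therefore has to prove the two statements simultaneously via a recursion on iterated counit triangles, ultimately leveraging: (i) the hypothesized triangulation on $\bbA\MMod_\cat C$ to provide target triangles to replicate; (ii) idempotent completeness of $\cat D$ to realize the summands produced by the splittings; and (iii) conservativity of both $G$ and $U_\bbA$ to transfer data freely between $\cat D$ and $\bbA\MMod_\cat C$. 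Concretely, one shows that the class of $d\in \cat D$ for which $E$ is fully faithful at $d$ and for which every object in the image-triangle-closure around $Ed$ is essentially in the image of $E$ forms a thick, retract-closed subcategory containing $F(\cat C)$, and then exploits the coupling to conclude it is all of $\cat D$.
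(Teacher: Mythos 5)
There is a genuine gap. Your argument is organized as a recursion: full faithfulness at $d$ is reduced, via the five lemma, to full faithfulness at the fibre $M$ of the counit $FGd\to d$, and essential surjectivity at $(x,\rho)$ is reduced to first realizing the fibre $N$ of $F_\bbA x\to(x,\rho)$. As you yourself observe, neither $M$ nor $N$ is ``simpler'' in any well-founded sense, so the recursion does not terminate; and the concluding claim --- that the class of good objects is a thick, retract-closed subcategory containing $F(\cat C)$ and is therefore all of $\cat D$ --- is precisely what needs proof, since nothing in the hypotheses says that $\cat D$ is generated as a thick subcategory by the free objects. The missing idea, which collapses the whole recursion to a single step, is that the counit of $F_\bbA\dashv U_\bbA$ is a \emph{split} epimorphism in $\bbA\MMod_\cat C$ itself: you correctly note that $U_\bbA$ applied to the counit triangle splits via $\eta_x$, but since $U_\bbA$ is faithful and exact this forces the connecting map of that triangle to vanish already in $\bbA\MMod_\cat C$, so \emph{every} module --- in particular $Ed$ for every $d\in\cat D$ --- is a direct summand of a free module. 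One then lifts the resulting idempotent on $F_\bbA Gd=E(FGd)$ to an idempotent on $FGd$ in $\cat D$ (using that $E$ is fully faithful between free objects, which is exactly your adjunction computation), splits it there by idempotent completeness of $\cat D$, and uses conservativity of $G$ to identify the resulting summand with $d$. This shows in one stroke that every object of $\cat D$ is a retract of a free object, whence $E$ is fully faithful (no five lemma, no induction) and surjective up to summands, hence an equivalence since $\cat D$ is idempotent complete.

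A secondary issue: your exactness step asserts that a triangle in $\bbA\MMod_\cat C$ whose image under $U_\bbA$ is distinguished is itself distinguished. A faithful (or conservative) exact functor does not reflect exactness in general; the fill-in axiom needed for your ``standard morphism-of-triangles argument'' requires both triangles to be known exact in advance. Under the stated hypotheses $U_\bbA$ is merely exact, which is why the theorem only claims that $E$ is an equivalence of (additive) categories and why exactness of $E$ is deferred to a separate remark under the additional assumption that $U_\bbA$ creates, hence reflects, the triangulation. Fortunately, exactness of $E$ is not needed anywhere in the corrected argument sketched above.
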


\begin{proof}
Keep in mind \eqref{eq:big_diag} throughout and consult \cite[Chap.~VI]{MacLane98} if necessary.
Let $d\in \cat D$ be an arbitrary object. By definition, the $\bbA$-module $Ed$ consists of the object $Gd\in \cat C$ equipped with the action 
\[ G\varepsilon_d\colon \bbA (Gd)= G FG d \longrightarrow Gd \]
where $\varepsilon$ denotes the counit of the adjunction $F\dashv G$.
As for any module, its action map can also be seen as a map in $\bbA\MMod_\cat C$
\[ G\varepsilon_d \colon F_\bbA U_\bbA Ed= (GFG d, G\varepsilon_{FGd}) \longrightarrow (Gd, G\varepsilon_d) = Ed\] 
providing the counit at the object $Ed$ for the Eilenberg--Moore adjunction.
By hypothesis, the right adjoint $U_\bbA\colon \bbA\MMod_\cat C\to \cat C$ of $F_\bbA$ is a faithful exact functor between triangulated categories, hence the counit $G\varepsilon_d$ admits a section $\sigma \colon Gd\to GFGd = F_\bbA U_\bbA Ed$ in $\bbA\MMod_\cat C$ (see e.g.\ \cite[Lemma~4.2]{BalmerDellAmbrogioSanders16}).
Thus we have in $\bbA\MMod_\cat C$ the split idempotent $p^2=p:=\sigma \circ G\varepsilon_d$ on the object $F_\bbA U_\bbA Ed$ with image~$Ed$:
\begin{equation*} 
\xymatrix @R=\Rdef{
F_\bbA U_\bbA Ed \ar[dr]_{G\varepsilon_d} \ar[rr]^-p && F_\bbA U_\bbA Ed\\
& Ed \ar[ur]_\sigma &
}
\end{equation*}
Since the composite functor $EK$ is fully faithful and the free module $F_\bbA U_\bbA Ed$ belongs to its image, we must have $p= EKq$ for an idempotent~$q$ in $\bbA\FFree_\cat C$, hence $p=Er$ for the idempotent $r:= Kq$ in~$\cat D$ on the object $FGd$. As $\cat D$ is idempotent complete by hypothesis, $r$ must split:
\begin{equation} \label{eq:r}
	\begin{gathered}
\xymatrix @R=\Rdef{
&\exists\, d' \ar[dr]^{\beta} & \\
FGd \ar[ur]^\alpha \ar[rr]_-r && FGd.
}
\end{gathered}
\end{equation}
Applying $E$ to~\eqref{eq:r},
we see that the idempotent $p$ of $\bbA\MMod_\cat C$ splits in two ways:
\[
\xymatrix @R=\Rdef{
& Ed' \ar[dr]^{E\beta} & \\
E FGd \ar[ur]^{E\alpha} \ar[dr]_{G\varepsilon_d} \ar[rr]^-{p\, = \, Er} && EFGd\\
& Ed \ar[ur]_\sigma &
}
\]
Applying the functor $U_\bbA$ (\ie forgetting actions), this yields in $\cat C$ the two splittings
\[
\xymatrix @R=\Rdef{
& Gd' \ar[dr]^{G\beta} & \\
GFGd \ar[ur]^{G\alpha} \ar[dr]_{G\varepsilon_d} \ar[rr]^-{p} && GFGd\\
& Gd \ar[ur]_\sigma &
}
\]
of the idempotent $p$ on $GFGd$. 
It follows that the composite $G(\varepsilon_d) \circ G(\beta) = G(\varepsilon_d\circ \beta)$ 
is an isomorphism $Gd' \cong Gd$
of the two images.
Since $G$ is assumed to be conservative, this implies that $\varepsilon_d \circ \beta$ is already an isomorphism $d'\cong d$ in~$\cat D$.
We conclude from \eqref{eq:r} that $d$ is a retract of an object $FGd = K(F_\bbA Gd)$ in the image of the functor~$K$.

As $d\in \cat D$ was arbitrary, we have proved that the fully faithful functor $K$ is surjective up to direct summands. Consider now the idempotent completions
\begin{equation*} 
\xymatrix @R=\Rdef{
\bbA \FFree_{\cat C} 
 \ar[rr]_{\exists !\, K} 
 \ar@{^{(}->}[d] &&
 \cat D
 \ar@{^{(}->}[d]^\simeq
  \ar[rr]_-{\exists!\, E}   &&
 \bbA\MMod_{\cat C} 
 \ar@{^{(}->}[d]^\simeq
 \\
(\bbA \FFree_{\cat C})^\natural 
 \ar[rr]_{K^\natural} &&
 \cat D^\natural
  \ar[rr]_-{E^\natural}   &&
 (\bbA\MMod_{\cat C})^\natural
}
\end{equation*}
together with the induced functors $K^\natural$ and~$E^\natural$.
The two rightmost canonical inclusions are equivalences, since $\cat D$ and $\bbA\MMod_\cat C$ are idempotent complete. What we have just proved amounts to $K^\natural$ being an equivalence too, \ie the Kleisi comparison functor induces an equivalence 
\[
K^\natural\colon (\bbA\FFree_\cat C)^\natural \stackrel{\sim}{\longrightarrow} \cat D^\natural \cong \cat D
\]
after idempotent completion. 

We can now easily see that  $E$ is an equivalence by chasing the above diagram: as $(EK)^\natural=E^\natural K^\natural $ is fully faithful and $K^\natural$ is an equivalence, $E^\natural$ is fully faithful, hence so is~$E$. 
As already observed, the faithfulness of $U_\bbA$ implies that $F_\bbA = EF$ is surjective up to summands, so $E$ must be too. But $E$ is fully faithful and its domain $\cat D$ is idempotent complete, hence it must be essentially surjective.
%
\end{proof}

\begin{Rem} \label{rem:exactness}
	If the triangulated structure on $\bbA\MMod_\cat C$ is \emph{inherited} from $\cat C$ in the sense that the forgetful functor $U_\bbA$ \emph{creates} the triangulation of $\bbA\MMod_\cat C$
 (\ie a triangle in $\bbA\MMod_\cat C$ is exact if and only if its image under $U_\bbA$ is exact in~$\cat C$) then the equivalence $E \colon \cat D \stackrel{\sim}{\to} \bbA \MMod_\cat C$
	of Theorem~\ref{thm:essential_monadicity}
	is an exact equivalence of triangulated categories.
	This is because $U_\bbA E=G$ is exact by hypothesis and $U_\bbA$ would then reflect exact triangles.
Similarly, we can conclude that $E$ is an exact equivalence by instead assuming that $G$, rather than $U_\bbA$, reflects exact triangles.
\end{Rem}

\begin{Cor}\label{cor:main}
	Let $\bbA$ be a monad on an idempotent complete triangulated category~$\cat C$.  If the Eilenberg--Moore adjunction $\cat C \rightleftarrows \bbA\MMod_\cat C$ is triangulated, then any triangulated realization $F\colon \cat C\rightleftarrows \cat D:\! G$ of $\bbA$ induces canonical equivalences:
\[
(\bbA\FFree_\cat C)^\natural 
\stackrel{\sim}{\longrightarrow}
(\cat D/ \Ker G)^\natural
\stackrel{\sim}{\longrightarrow}
\bbA\MMod_\cat C \,.
\]
\end{Cor}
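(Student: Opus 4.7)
The plan is to reduce the corollary directly to Theorem~\ref{thm:essential_monadicity} by first applying the canonical modification from Remark~\ref{rem:adjustable}. Starting from the given triangulated realization $F\colon \cat C\rightleftarrows \cat D:\! G$, form the Verdier quotient $\cat D/\Ker G$ and its idempotent completion $\widetilde{\cat D}:=(\cat D/\Ker G)^\natural$, and let $\widetilde F\dashv \widetilde G$ be the induced triangulated adjunction $\cat C \rightleftarrows \widetilde{\cat D}$. By Remarks \ref{rem:necessary} and \ref{rem:adjustable}, this new adjunction still realizes the monad $\bbA$, has idempotent complete target, and has conservative right adjoint $\widetilde G$.

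Theorem~\ref{thm:essential_monadicity} then applies to $\widetilde F\dashv \widetilde G$ and shows that its Eilenberg--Moore comparison functor
\[
\widetilde E\colon \widetilde{\cat D}=(\cat D/\Ker G)^\natural \stackrel{\sim}{\longrightarrow} \bbA\MMod_\cat C
\]
is an equivalence, establishing the second canonical equivalence of the corollary.

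For the first equivalence, I would revisit the proof of Theorem~\ref{thm:essential_monadicity}, which en route establishes that the idempotent completion $K^\natural$ of the Kleisli comparison functor is an equivalence. Applied to the adjunction $\widetilde F\dashv \widetilde G$ (whose Kleisli comparison $\widetilde K\colon \bbA\FFree_\cat C \to \widetilde{\cat D}$ agrees with the composite of the original $K\colon \bbA\FFree_\cat C\to \cat D$ with the canonical functor $\cat D\to \widetilde{\cat D}$, by uniqueness of the comparison), this yields a canonical equivalence
\[
\widetilde K^\natural\colon (\bbA\FFree_\cat C)^\natural \stackrel{\sim}{\longrightarrow} \widetilde{\cat D}^{\natural}\cong \widetilde{\cat D}=(\cat D/\Ker G)^\natural
\]
since $\widetilde{\cat D}$ is already idempotent complete.

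There is essentially no obstacle: all the substantive work is done in Theorem~\ref{thm:essential_monadicity}, and the corollary is just an assembling step. The only mild subtlety is bookkeeping to confirm that the equivalences thus produced coincide with the \emph{canonical} functors $(\bbA\FFree_\cat C)^\natural \to (\cat D/\Ker G)^\natural \to \bbA\MMod_\cat C$ induced by the adjunction $F\dashv G$; this follows from the universal properties of the Kleisli and Eilenberg--Moore comparison functors and the universal properties of the Verdier quotient and idempotent completion, all of which guarantee that the various comparisons fit together uniquely.
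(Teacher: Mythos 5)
Your proposal is correct and follows exactly the route the paper intends: reduce to the idempotent-complete, conservative case via Remark~\ref{rem:adjustable}, invoke Theorem~\ref{thm:essential_monadicity} for the second equivalence, and extract the first from the fact that the theorem's proof shows $K^\natural$ is an equivalence. The paper leaves this assembly implicit, and your bookkeeping of the comparison functors' compatibility is the right (and only) point needing care.
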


\begin{Exa}
	Let $\cat C:=\SH$ denote the stable homotopy category of spectra.
	If $A$ is a highly structured ring spectrum ($S$-algebra, $\mathrm A_\infty$-ring spectrum, brave new ring,\,\ldots),
	then we may consider its \emph{derived category} $\Der(A)$, defined to be the homotopy category of highly structured $A$-modules; see \eg \cite{EKMM97}. 
	The unit map $f\colon S\to A$ induces a triangulated 
	adjunction $f^*= A\wedge -\colon \SH = \Der(S) \rightleftarrows \Der(A):\! \Hom_A(A_S, -)=f_*$. 
On the other hand, by forgetting structure, $A$ is also a monoid in $\SH$ and therefore we may consider modules over it in $\SH$. The resulting category $A\MMod_{\SH}$ of \emph{na\"ive} or \emph{homotopy} $A$-modules is nothing else but the Eilenberg--Moore category for the monad associated with the adjunction $f^*\dashv f_*$.  Thus we obtain a comparison functor $E$ as in~\eqref{ter:monads}, which can be thought of as forgetting the higher structure of an $A$-module:
\[
\xymatrix{
\SH \ar@<-2pt>[d]_-{f^*} \ar@<-2pt>[drr]_-{F_A} && \\
\Der(A) \ar@<-2pt>[u]_{f_*} \ar[rr]_-{\exists!\, E} && A\MMod_{\SH} \ar@<-2pt>[ull]_-{U_A}
}
\]
Note that both triangulated categories $\SH$ and $\Der(A)$ are idempotent complete; \eg because they admit infinite coproducts. The right adjoint $f_*$ is conservative by construction; this is equivalent to $A=f^*S \in \Der(A)$ weakly generating $\Der(A)$. 
Moreover $f_*$ creates the triangulation of $\Der(A)$, again by construction.
Hence Theorem~\ref{thm:essential_monadicity} and Remark~\ref{rem:exactness} immediately imply the following result.
\end{Exa}

\begin{Cor} \label{cor:equiv_ring}
Let $A$ be any highly structured ring spectrum. 
Then the category $A \MMod_{\SH}$ of na\"ive $A$-modules is triangulated, in a way that makes the forgetful functor to $\SH$ exact, if and only if the canonical comparison functor $\Der(A)\to A \MMod_{\SH}$ is a (necessarily exact) equivalence.
\qed
\end{Cor}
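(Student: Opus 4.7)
The plan is to deduce this directly from Theorem~\ref{thm:essential_monadicity} together with Remark~\ref{rem:exactness}, by verifying that the concrete setup of the preceding example matches the hypotheses of these general results.

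The ``if'' direction is essentially automatic. Suppose the canonical comparison $E\colon \Der(A)\to A\MMod_{\SH}$ is an equivalence; then I would simply transport the triangulation of $\Der(A)$ across $E$ to equip $A\MMod_{\SH}$ with the structure of a triangulated category. Under this identification the forgetful functor $U_A$ corresponds to $f_* = U_A\circ E$, which is exact by construction, so the forgetful functor is exact as required.

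For the ``only if'' direction, assume that $A\MMod_{\SH}$ admits a triangulation making $U_A$ exact. By the remark preceding the example, $F_A$ is then automatically exact as well, so $F_A \dashv U_A$ is a triangulated realization of the monad $\bbA := U_AF_A$ on $\cat C := \SH$. The unit-induced adjunction $f^*\dashv f_*$ is a second triangulated realization of the same monad. To invoke Theorem~\ref{thm:essential_monadicity} I only have to check three points, all already recorded in the example: (i) $\SH$ is idempotent complete, and (ii) $\Der(A)$ is idempotent complete, because both categories admit arbitrary coproducts; (iii) $f_*$ is conservative, because $A = f^*S$ is a weak generator of $\Der(A)$ by construction. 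The theorem then delivers an equivalence of categories $E\colon \Der(A)\stackrel{\sim}{\to} A\MMod_{\SH}$.

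It remains to upgrade this to an equivalence of triangulated categories, and for this I would appeal to the second clause of Remark~\ref{rem:exactness}: since $f_* = U_A \circ E$ creates the triangulation of $\Der(A)$ by construction, it in particular reflects exact triangles, which is precisely the alternative hypothesis in that remark. I do not anticipate any real obstacle: the work lies entirely in unpacking standard properties of $\SH$ and $\Der(A)$ in order to confirm that the abstract machinery of Theorem~\ref{thm:essential_monadicity} applies, after which both the equivalence and its exactness are immediate.
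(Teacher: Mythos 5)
Your proposal is correct and follows the same route as the paper: the authors likewise verify in the preceding example that $\SH$ and $\Der(A)$ are idempotent complete, that $f_*$ is conservative, and that $f_*$ creates the triangulation of $\Der(A)$, and then cite Theorem~\ref{thm:essential_monadicity} together with Remark~\ref{rem:exactness} (via its second clause, exactly as you do). The trivial ``if'' direction by transport of structure is left implicit in the paper but is handled correctly in your write-up.
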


\begin{Exa}
	\label{exa:EML}
The comparison between strict and na\"ive modules was studied by
Guti\'errez~\cite{Gutierrez05} in the special case where  $A=\EM R$ is the
Eilenberg--Mac~Lane spectrum of an ordinary associative and unital ring~$R$.  He
showed that the comparison map is an equivalence if $R$ is a field or a subring
of~$\bbQ$, for instance the ring of integers~$\bbZ$.  
\end{Exa}

\subsection*{Acknowledgements:}
We would like to thank Paul Balmer and Javier Guti\'er\-rez for useful discussions on these topics.

\bibliographystyle{alpha}%
\bibliography{TG-articles}

\end{document}